\newtheorem{thm}{Theorem}[section]
\newtheorem{lemma}{Lemma}[section]
\def \R{{\Bbb R}}
\numberwithin{equation}{section}
\begin{document}

\title[Exponential decay for the ZK equation]
{Exponential decay for the linear Zakharov-Kuznetsov equation
without critical domains restrictions}
\author[
G.~G. Doronin,
\ N.~A. Larkin]
{
G.~G. Doronin,
\ N.~A. Larkin$^\S$
\bigskip
\\
{\tiny
Departamento de Matem\'atica,\\
Universidade Estadual de Maring\'a,\\
87020-900, Maring\'a - PR, Brazil.
}
}
\address
{
Departamento de Matem\'atica\\
Universidade Estadual de Maring\'a\\
87020-900, Maring\'a - PR, Brazil.
}
\email{ggdoronin@uem.br \ \  nlarkine@uem.br}
\date{}

\subjclass
{35M20, 35Q72}
\keywords
{ZK equation, stabilization}

\thanks{$^\S$Corresponding author; partially supported by Funda\c{c}\~ao Arauc\'aria}

\begin{abstract}
Initial-boundary value problems for the linear Zakharov-Kuznetsov
equation posed on bounded rectangles are considered. Spectral
properties of a stationary operator are studied in order to show
that the evolution problem posed on a bounded rectangle has no
critical restrictions on its size. Exponential decay of regular
solutions is established.
\end{abstract}

\maketitle

\section{Introduction}\label{introduction}

We are concerned with initial-boundary value problems (IBVPs) posed
on bounded rectangles for the Zakharov-Kuznetsov (ZK) equation
\begin{equation}\label{zk}
u_t+u_x+uu_x +u_{xxx}+u_{xyy}=0
\end{equation}
which is a two-dimensional
analog of the well-known Korteweg-de Vries (KdV) equation
\begin{equation}\label{kdv}
u_t+uu_x+u_{xxx}=0
\end{equation}
with clear plasma physics applications \cite{zk}.

Equations \eqref{zk} and \eqref{kdv} are typical examples
of so-called
dispersive equations which jointly with the Schr\"{o}dinger and
other equations attract considerable attention of both pure and
applied mathematicians in the past decades. The KdV equation is
probably more studied in this context. The theory of the
initial-value problem for \eqref{kdv} posed on the whole real line
is considerably advanced today \cite{bona2,kato,ponce2} and the
references therein.

Recently, due to physics and numerics needs, publications on
initial-boundary value problems for dispersive equations have been
appeared \cite{bona1,larkin,lar2}. In particular, it has been
discovered that the KdV equation posed on a bounded interval
possesses an implicit internal dissipation. This allowed to prove
the exponential decay rate of small solutions for \eqref{kdv} posed
on bounded intervals without adding any artificial damping term
\cite{zuazua}. Similar results were proved for a wide class of
dispersive equations of any odd order with one space variable
\cite{familark}.

However, \eqref{kdv} is a satisfactory approximation for real waves
phenomena while the equation is posed on the whole line
($x\in\mathbb{R}$); if cutting-off domains are taken into account,
\eqref{kdv} is no longer expected to mirror an accurate rendition of
reality. More correct equation in this case (see, for instance,
\cite{bona1}) should be written as
\begin{equation}\label{1.3}
u_t+ u_x+uu_x+u_{xxx}=0.
\end{equation}
Indeed, if $x\in\R,\ t>0$,
the linear traveling term $u_x$ in \eqref{1.3} can be easily scaled
out by a simple change of variables; but it can
not be safely ignored for problems posed on both finite and semi-infinite intervals without
changes in the original domain.

Once bounded domains are considered
as a spatial region of waves propagation, their sizes appear to be
restricted by certain critical conditions. An important result
regarding these conditions is the explicit description
of a spectrum-related countable critical set \cite{rosier}
\begin{equation}\label{critKdV}
\mathcal{N}=\left\{\frac{2\pi}{\sqrt3}\sqrt{k^2+kl+l^2}\,;\ \ \
k,l\in\mathbb{N}\right\}.
\end{equation}
While studying the controllability and stabilization of solutions
for \eqref{1.3}, the set $\mathcal{N}$ provides qualitative
difficulties when the length of a spatial interval coincides with
some of its elements \cite{rosier}.
More recent results on control and stabilizability for the KdV equation can
be found in \cite{rosier1,rozan}.

Quite recently, the interest on dispersive equations became to be
extended to the multi-dimensional models such as
Kadomtsev-Petviashvili (KP) and ZK equations. As far as the ZK
equation is concerned, the results on both IVP and IBVP can be found
in
\cite{faminski,faminski2,farah,pastor,pastor2,saut,temam2}. Our work
has been inspired by \cite{temam} where \eqref{zk} has been
considered on a strip bounded in the $x$ variable. Studying this
paper, we have found that the term $u_{xyy}$ in \eqref{zk} delivers
additional dissipation which may ensure decay of solutions. For
instance, the term $u_{xyy}$ provides exponential decay of small
solutions in a channel-type domain; namely, in a half-strip
unbounded in $x$ direction \cite{larkintronco}. However, there are
restrictions on a width of a channel.

It has been showed in \cite{dorlar} that the above restrictions are stipulated by the spectral
properties of the corresponding stationary operator. More precisely, considering linearized \eqref{zk} posed on a rectangle
$$
\mathcal{D}=(0,L)\times(0,B)\subset \mathbb{R}^2
$$
with the simplest Dirichlet-type boundary data, one can see that stabilizability of solutions fails if $L>0$ and $B>0$ solve
\begin{equation}\label{critZK}
\left(\frac{2\pi}{L\sqrt3}\sqrt{k^2+kl+l^2}\right)^2+\left(\frac{\pi n}{B}\right)^2=1,
\end{equation}
i.e., if $\mathcal{D}$ is of a {\it critical} size, likewise in the
case of KdV posed on an interval. In other words, \eqref{critZK} is
a 2D generalization of \eqref{critKdV}.

The following question arises naturally:
\begin{itemize}
\item
Are there some physically reasonable mechanisms which help to avoid
the critical restrictions for the ZK equation?
\end{itemize}

In the present paper we show that there are specific physically
reasonable boundary conditions such that corresponding IBVP with no
size restrictions on a domain possess solutions that decay
exponentially, at least in a linear framework. We exploit
effectively the dissipative role of the term $u_{xyy}$ which
apparently is due to the elliptic properties of a stationary
operator considering as applied to $u_x.$

The main goal of our paper is to establish the existence and
uniqueness of global-in-time regular solutions of linearized
\eqref{zk} posed on bounded rectangles with a special type boundary
condition on the part $\{y=B\}$ of a spatial domain,
and the exponential decay rate of these solutions independent of
critical size limitations.

The paper has the following structure. Section 1 is Introduction.
Section \ref{problem} contains formulation of the problem and
auxiliaries. In Section \ref{existence}, we prove the existence
theorem and preliminary estimates. In Section \ref{decay}, we
provide forthcoming estimates to establish our principal
stabilization result.

\section{Problem and preliminaries}\label{problem}

Let $L,B,T$ be finite positive numbers. Define
\begin{equation*}
\mathcal{D}=\{(x,y)\in\mathbb{R}^2: \ x\in(0,L),\ y\in(0,B) \},\ \ \
\mathcal{D}_T=\mathcal{D}\times (0,T).
\end{equation*}

We consider in $\mathcal{D}_T$ the following IBVP:
\begin{align}
L^{\mathcal{D}}&u\equiv u_t+u_x+u_{xxx}+u_{xyy}=0,\ \ \text{in}\
\mathcal{D}_T; \label{2.1}
\\
&u(x,0,t)=0,\ u(x,B,t)=u_{xy}(x,B,t),\ \ x\in(0,L),\ t>0;
\label{2.2}
\\
&u(0,y,t)=u(L,y,t)=u_x(L,y,t)=0,\ \ y\in(0,B),\ t>0;
\label{2.3}
\\
&u(x,y,0)=u_0(x,y),\ \ (x,y)\in\mathcal{D},
\label{2.4}
\end{align}
where $u_0:\mathcal{D}\to\mathbb{R}$ is a given function.

Hereafter subscripts $u_x,\ u_{xy},$ etc. denote the partial
derivatives, as well as $\partial_x$ or $\partial_{xy}^2$ when it is
convenient. Operators $\nabla$ and $\Delta$ are the gradient and
Laplacian acting over $\mathcal{D}.$ By $(\cdot,\cdot)$ and
$\|\cdot\|$ we denote the inner product and the norm in
$L^2(\mathcal{D}),$ and $\|\cdot\|_{H^k}$ stands for the norm in the
$L^2$-based Sobolev spaces.

The following result will be useful:
\begin{lemma}\label{lemma1}
For arbitrary $L>0,\ B>0$ let $u:[0,L]\times[0,B]\to\mathbb{C}$ be a
regular solution to the eigenvalue problem
\begin{align}
&u_x+u_{xxx}+u_{xyy}=\lambda u,\ \ (x,y)\in \mathcal{D},\ \
\lambda\in\mathbb{C};\label{2.9}
\\
&u(x,0)=0,\ u(x,B)=u_{xy}(x,B),\ \ x\in (0,L); \label{2.10}
\\
&u(0,y)=u_x(0,y)=u(L,y)=u_x(L,y)=0,\ \ y\in(0,B). \label{2.11}
\end{align}
Then $u\equiv 0.$
\end{lemma}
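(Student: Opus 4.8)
The plan is to combine a dissipative energy identity, which disposes of every $\lambda$ with $\mathrm{Re}\,\lambda\le 0$, with a Fourier/Paley--Wiener reduction in the $x$ variable that settles the remaining spectral values. First I would multiply \eqref{2.9} by $\bar u$ and integrate over $\mathcal{D}$. Integrating by parts in $x$ and using $u(0,y)=u_x(0,y)=u(L,y)=u_x(L,y)=0$, the contributions of $u_x$ and $u_{xxx}$ have vanishing real part. The only surviving boundary term comes from $u_{xyy}$: one integration by parts in $y$ produces the trace $[u_{xy}\bar u]_{y=0}^{y=B}$, and here the special condition $u(x,B)=u_{xy}(x,B)$ is decisive, since it converts this trace into $|u_{xy}(x,B)|^2$. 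The remaining interior term $\iint u_{xy}\bar u_y$ is purely imaginary after one integration by parts in $x$ (again using $u_y(0,y)=u_y(L,y)=0$). Taking real parts yields
\begin{equation*}
\mathrm{Re}\,\lambda\,\|u\|^2=\int_0^L|u_{xy}(x,B)|^2\,dx\ge 0 .
\end{equation*}
Consequently $u\equiv 0$ whenever $\mathrm{Re}\,\lambda<0$, and when $\mathrm{Re}\,\lambda=0$ the identity forces $u_{xy}(\cdot,B)\equiv 0$, hence by the boundary relation $u(\cdot,B)\equiv 0$ as well.

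The genuinely delicate case is $\mathrm{Re}\,\lambda>0$, where decaying eigenmodes are a priori admissible and no energy inequality alone can exclude them. Here I would exploit the over-determination in $x$: the operator in \eqref{2.9} is third order in $x$, yet \eqref{2.11} imposes the four conditions $u=u_x=0$ at both $x=0$ and $x=L$. This lets me extend $u(\cdot,y)$ by zero to all of $\mathbb{R}$ with $H^2$ regularity in $x$ and apply the finite Fourier transform $\hat u(\xi,y)=\int_0^L u(x,y)e^{-i\xi x}\,dx$, which, for each $y$, is by Paley--Wiener an entire function of $\xi$ of exponential type $L$. Transforming \eqref{2.9} gives, for every $\xi\in\mathbb{C}$, the second-order ODE in $y$
\begin{equation*}
i\xi\,\hat u_{yy}+\bigl[i\xi(1-\xi^2)-\lambda\bigr]\hat u=u_{xx}(0,y)-u_{xx}(L,y)e^{-i\xi L},
\end{equation*}
while $u(x,0)=0$ and $u(x,B)=u_{xy}(x,B)$ become $\hat u(\xi,0)=0$ and the $\xi$-dependent Robin-type condition $\hat u(\xi,B)=i\xi\,\hat u_y(\xi,B)$.

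The crux is then to show that entirety of $\hat u$ in $\xi$ is incompatible with a nontrivial solution, for every $\lambda$ and every $L,B$. Solving the $y$-ODE in closed form introduces the unknown endpoint traces $u_{xx}(0,\cdot)$ and $u_{xx}(L,\cdot)$ and, generically, poles in $\xi$ located at the resonant values where the homogeneous $y$-problem with the above boundary data degenerates; requiring $\hat u$ to be entire must cancel these poles. In the classical Dirichlet setting this cancellation is possible exactly on the countable critical set \eqref{critZK}, whereas the point of the present relation $\hat u(\xi,B)=i\xi\,\hat u_y(\xi,B)$ is that the resonance condition can never be met, so the only entire solution is $\hat u\equiv 0$, whence $u\equiv 0$. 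I expect this resonance/entirety analysis --- tracking how the modified condition at $y=B$ eliminates the critical configurations present for KdV --- to be the main obstacle and the technical heart of the proof.
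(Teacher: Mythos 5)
Your energy identity is correct: with the relation $u(x,B)=u_{xy}(x,B)$ the only surviving real boundary contribution is $\int_0^L|u(x,B)|^2\,dx$, so indeed $\mathrm{Re}\,\lambda\,\|u\|^2=\int_0^L|u(x,B)|^2\,dx\ge 0$ and the case $\mathrm{Re}\,\lambda<0$ is settled. But the proposal does not prove the lemma. First, the case $\mathrm{Re}\,\lambda=0$ is not ``disposed of'': the identity only yields $u(\cdot,B)\equiv 0$, not $u\equiv 0$, so this case (including $\lambda=0$) still has to be routed through whatever argument handles the remaining spectrum, or through a separate unique-continuation step. Second, and decisively, the entire content of the lemma for $\mathrm{Re}\,\lambda\ge 0$ is concentrated in the step you defer: showing that entirety (Paley--Wiener) of $\xi\mapsto\hat u(\xi,y)$ is incompatible with a nontrivial solution of the $y$-ODE carrying the data $\hat u(\xi,0)=0$, $\hat u(\xi,B)=i\xi\,\hat u_y(\xi,B)$ and the unknown traces $u_{xx}(0,\cdot)$, $u_{xx}(L,\cdot)e^{-i\xi L}$ on the right-hand side. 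You state explicitly that you expect this ``resonance/entirety analysis'' to be the main obstacle and the technical heart of the proof --- i.e., it is not carried out. As written, the argument excludes only $\mathrm{Re}\,\lambda<0$.

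For comparison, the paper's own proof takes a different and much shorter route: for $\lambda=0$ it reflects $y\mapsto B-y$, extends $u$ by zero in $x$ and invokes Holmgren's uniqueness theorem; for $\lambda\ne 0$ it applies the Fourier transform to the zero extension and asserts that $\hat u$ solves the homogeneous ODE $\hat u_{yy}+\bigl(1-\xi^2-\lambda/(i\xi)\bigr)\hat u=0$ with full Cauchy data $\hat u|_{y=0}=\hat u_y|_{y=0}=0$, concluding $\hat u\equiv 0$ by ODE uniqueness --- no resonance analysis and no energy identity at all. Your transformed equation, which retains the inhomogeneous traces $u_{xx}(0,y)-u_{xx}(L,y)e^{-i\xi L}$, is the more careful bookkeeping of the two; but precisely because you keep them you are left with a two-point boundary value problem in $y$ whose non-degeneracy for every $\xi$ (equivalently, the impossibility of cancelling the resonant poles of an entire function of exponential type) you would still have to establish, and that is exactly the part that is missing.
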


\begin{proof}
Performing $y\mapsto B-y$ and continuing $u$ by zero to all
$x\in\mathbb{R},$ the result for $\lambda =0$ follows by Holmgren's
uniqueness theorem \cite{bers}. If $\lambda \ne 0,$ the function
$$
\widehat{u}(\xi,y)=\frac{1}{\sqrt{2\pi}}
\int_{\mathbb{R}}\overline{u}(x,y)e^{-i\xi x}\,dx,
$$
where
$$
\overline{u}:\mathbb{R}\times\mathbb{R}^+ \to\mathbb{R},\
\overline{u}(x,y)=1_{[0,L]}u(x,y)
$$
solves
$$
\widehat{u}_{yy}+\left(1-\xi^2-\frac{\lambda}{i\xi}\right)\widehat{u}=0,\
\widehat{u}|_{y=0}=\widehat{u}_y|_{y=0}=0.
$$
Therefore, $\widehat{u}\equiv 0$ which gives $u\equiv 0,$ as well.
\end{proof}

\section{Existence theorem}\label{existence}

In this section we state the existence results for problems in
$\mathcal{D}_T.$
\begin{thm}\label{theorem1}
Let $u_0\in H^3(\mathcal{D})$ be a given function such that
$$
u_0|_{y=B}=u_{0xy}|_{y=B},\
u_0|_{y=0}=u_0|_{x=0,L}=u_{0x}|_{x=L}=0.
$$
Then for all finite
positive $L,B,T$ there exists a unique regular solution to
\eqref{2.1}-\eqref{2.4} such that
\begin{align*}
&u\in L^{\infty}(0,T;H^3(\mathcal{D}));\\
&u_t\in L^{\infty}(0,T;L^2(\mathcal{D}))
\end{align*}
and for all $t\in(0,T)$ it holds
\begin{align}\label{33.20}
\|u\|^2(t)&+\int_0^t\Bigl[\|u_x\|^2(s)+\|u_y\|^2(s)\Bigr]\,ds\notag\\
&+\int_0^t\left\{\int_0^Bu_x^2(0,y,s)\,dy+2\int_0^L(1+x)u^2(x,B,s)\,dx\right\}\,ds\notag\\
&\le (1+L+T)\|u_0\|^2.
\end{align}
\end{thm}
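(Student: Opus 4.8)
The plan is to build the solution through a standard approximation scheme (Faedo--Galerkin, or an elliptic regularization), using the a priori bound \eqref{33.20} as its backbone. Uniqueness is then immediate from linearity: the difference $w$ of two solutions solves \eqref{2.1}--\eqref{2.4} with $w_0=0$, and \eqref{33.20} forces $w\equiv0$. The real work is therefore to establish \eqref{33.20} for the approximations uniformly in the discretization parameter, and then to upgrade it to $u\in L^\infty(0,T;H^3)$ with $u_t\in L^\infty(0,T;L^2)$, so that one may pass to the limit and recover the nonstandard trace relation \eqref{2.2}.

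The estimate \eqref{33.20} comes from two multipliers. Taking the $L^2(\mathcal{D})$ inner product of \eqref{2.1} with $u$, the transport term integrates to zero by \eqref{2.3}, the term $u_{xxx}$ leaves the boundary contribution $\tfrac12\int_0^Bu_x^2(0,y)\,dy$, and $u_{xyy}$, integrated by parts in $y$, leaves $\int_0^Lu(x,B)u_{xy}(x,B)\,dx$ (the interior remainder dropping out since $u_y=0$ on $x=0,L$). Here \eqref{2.2} is decisive: it turns this last product into a genuine \emph{square}, giving
\begin{equation*}
\tfrac12\frac{d}{dt}\|u\|^2+\tfrac12\int_0^Bu_x^2(0,y)\,dy+\int_0^Lu^2(x,B)\,dx=0,
\end{equation*}
so that $\|u\|^2(t)\le\|u_0\|^2$. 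Pairing \eqref{2.1} with $2(1+x)u$ instead, the weight turns the dispersive terms into the interior quantities $3\|u_x\|^2+\|u_y\|^2$ plus the nonnegative boundary integrals $\int_0^Bu_x^2(0,y)\,dy$ and $2\int_0^L(1+x)u^2(x,B)\,dx$ (the remaining boundary pieces vanishing since $u_y=0$ on $x=0,L$), at the cost of a term $\|u\|^2$ on the right. Integrating in $t$ and bounding the two source contributions by $\int_{\mathcal{D}}(1+x)u_0^2\le(1+L)\|u_0\|^2$ and $\I\|u\|^2\,ds\le T\|u_0\|^2$ (the latter from the first estimate) yields \eqref{33.20} with the constant $1+L+T$. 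No critical-size condition on $L,B$ enters, in accordance with Lemma \ref{lemma1}.

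For the regularity I would differentiate \eqref{2.1}--\eqref{2.3} in $t$: since the boundary data are linear and time-independent, $v=u_t$ solves the same problem, with initial value $v(\cdot,0)=-(u_{0x}+u_{0xxx}+u_{0xyy})\in L^2(\mathcal{D})$. The first estimate above, applied to $v$, then gives $\|u_t\|(t)\le\|v(\cdot,0)\|\le C\|u_0\|_{H^3}$, i.e. $u_t\in L^\infty(0,T;L^2)$. Reading \eqref{2.1} as the stationary relation $u_{xxx}+u_{xyy}=-u_t-u_x$ with right-hand side in $L^2$, one then recovers the remaining spatial derivatives and concludes $u\in L^\infty(0,T;H^3)$.

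The main obstacle is precisely this last step. The spatial operator $u_x+u_{xxx}+u_{xyy}=\partial_x(1+\Delta)u$ is \emph{not} elliptic---its top-order symbol $-i\xi_1(\xi_1^2+\xi_2^2)$ vanishes on $\{\xi_1=0\}$---so control of $\|u_{xxx}+u_{xyy}\|$ alone does not bound every third derivative, the purely $y$-directional ones in particular. Closing the $H^3$ estimate therefore forces one to exploit the full structure of the boundary conditions \eqref{2.2}--\eqref{2.3}, and it is the unique solvability of the underlying stationary problem---the content of Lemma \ref{lemma1}---that keeps the scheme well posed for \emph{all} $L,B$. Checking that the limit actually attains the Robin-type condition \eqref{2.2} in the trace sense is the final delicate point.
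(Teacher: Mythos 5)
Your two multiplier computations are exactly the paper's Estimates I and II: the product $\int_0^L u(x,B)u_{xy}(x,B)\,dx$ becoming a square via \eqref{2.2}, the weight $(1+x)$ producing $3\|u_x\|^2+\|u_y\|^2$ plus the nonnegative traces, and the constant $1+L+T$ all match \eqref{f54}, \eqref{f55}, \eqref{3.6}--\eqref{3.7}. So the heart of \eqref{33.20} is identical. Where you diverge is the existence mechanism, and this is also where your sketch has its gaps. The paper does not use Galerkin or elliptic regularization: it defines $A v=-v_x-\Delta v_x$ on the domain $D(A)\subset H^3(\mathcal{D})$ that encodes \emph{all} of the boundary conditions \eqref{2.2}--\eqref{2.3}, verifies $(Av,v)\le 0$, and --- the one computation entirely absent from your proposal --- verifies dissipativity of the adjoint $A^{*}w=\Delta w_x+w_x$ on the domain with the reflected conditions $w(x,B)=-w_{xy}(x,B)$, $w|_{x=0}=w_x|_{x=0}=w|_{x=L}=w_x|_{x=L}=0$. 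Lumer--Phillips then yields a contraction semigroup, and $u(t)=S(t)u_0$ lies in $D(A)$ for every $t$ with $\|u(t)\|_{D(A)}\le\|u_0\|_{D(A)}$; your time-differentiation argument for $u_t\in L^\infty(0,T;L^2)$ is the same fact in the form $\|Au(t)\|\le\|Au_0\|$.

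The semigroup route is precisely what disposes of the three points you leave open. Membership of $u(t)$ in $H^3$ and the trace relation \eqref{2.2} are not recovered in a limit --- they are built into $D(A)$, which the flow preserves, so there is no passage to the limit in which the Robin-type condition could be lost. By contrast, your Galerkin scheme would first require a basis of $D(A)$ compatible with the nonstandard coupling $v(x,B)=v_{xy}(x,B)$, which is not a classical variational boundary condition and for which no natural eigenbasis is available; this is a genuine obstruction, not a routine detail. Your observation that $\partial_x(1+\Delta)$ is not elliptic is correct and important: it means $\|u_{xxx}+u_{xyy}\|\le\|u_t\|+\|u_x\|$ does \emph{not} by itself control $\|u\|_{H^3}$ (the $u_{yyy}$ direction is invisible to it), so the final step of your regularity argument does not close as written. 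To be fair, the paper's own claim $u\in L^\infty(0,T;H^3)$ rests on the identification of the graph norm of $A$ with the $H^3$ norm on $D(A)$, which is asserted rather than proved there; but the paper at least secures $u(t)\in H^3$ pointwise and the uniform graph-norm bound, whereas your scheme as described would deliver only $u\in L^\infty(0,T;H^1)$ with $u_t\in L^\infty(0,T;L^2)$ plus the estimate \eqref{33.20}. If you want to keep the Galerkin approach, you must either construct the special basis in $D(A)$ or add a separate a priori estimate controlling the pure $y$-derivatives.
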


\begin{proof}
To prove this theorem we use the classical semigroup approach and appropriate boundary estimates.
First we write \eqref{2.1}-\eqref{2.4} as an abstract evolution equation
\begin{equation}\label{AbstrEq}
\frac{d}{dt}u=Au
\end{equation}
subject to the initial condition
\begin{equation}\label{AbstrId}
u(0)=u_0.
\end{equation}
Consider the space
\begin{align*}
D(A)=\Bigl\{ &v\in H^{3}(\mathcal{D}):\ v(x,0)=0,\
v(x,B)=v_{xy}(x,B),\\
&v(0,y)=v_x(0,y)=v(L,y)=v_x(L,y)=0\Bigr\}
\end{align*}
and the closed linear operator $A:D(A)\rightarrow
L^{2}(\mathcal{D})$ defined by
$$
Av=-v_x-\Delta v_x.
$$
Let $v\in D(A)  .$ Then%
\[
\left(Av,v\right)  =
-\frac12\int_0^Bv_x^2(0,y)\,dy-\int_0^Lv^2(x,B)\,dx \leq0.
\]
On the other hand, for the adjoint operator $A^{\ast}$ defined as
$$
A^{\ast}w=\Delta w_x+w_x
$$
with the domain
\begin{align*}
D(A^{\ast})=\Bigl\{ &w\in H^{3}(\mathcal{D}):\ w(x,0)=0,\
w(x,B)=-w_{xy}(x,B),\\
&w(0,y)=w_x(0,y)=w(L,y)=w_x(L,y)=0\Bigr\}
\end{align*}
it holds
$$
\left(w,A^{\ast}w\right)
=-\frac{1}{2}\int_0^Bw_x^2(L,y)\,dy-\int_0^Lw^2(x,B)\,dx \leq 0
$$
which means that both $A$ and $A^{\ast}$ are dissipative. By
semigroup theory (see, for instance, \cite{pazy}), the operator $A$
generates a strongly continuous semigroup of contractions $\left\{
S\left(  t\right)  \right\} _{t\geq0}$ on $L^{2}\left(
\mathcal{D}\right)  .$ Then for all $u_{0}\in D(A)$ there exists a
unique solution $u(t) =S\left( t\right)u_{0}$ for \eqref{AbstrEq},
\eqref{AbstrId} satisfying
$$
u\in C\bigl( [0,T]  ;D(A) \bigr)\cap
C^1\bigl((0,T);L^2(\mathcal{D})\bigr)
$$
and
\begin{equation*}
\left\Vert u\right\Vert _{C([0,T];D(A))}\leq\left\Vert u_{0}\right\Vert_{D(A)} .\label{f53}%
\end{equation*}
Due to the structure of \eqref{2.1}, one concludes that the second
inclusion above is in fact closed, i.e. $u\in
C^1\left([0,T];L^2(\mathcal{D})\right).$ To see that $u(x,y,t)$
satisfies \eqref{33.20}, we need the following estimates.

\subsection{Estimate I}\label{1-st estimate}
Multiply \eqref{2.1} by $u$ and integrate over $\mathcal{D}$ to
obtain
\begin{align*}\label{3.5}
(L^{\mathcal{D}}u,u)&\equiv \frac12\frac{d}{dt}\|u\|^2(t)\notag\\
&+\frac12\int_0^Bu_x^2(0,y,t)\,dy+\int_0^Lu^2(x,B,t)\,dx=0 ,\ \
t\in(0,T).
\end{align*}
Integrating in $t\in(0,T)$ gives
\begin{equation}\label{f54}
\|u\|(t)\leq\|u_{0}\|, \ \ \ t\in[0,T]
\end{equation}
and
\begin{align}
\Phi(T,L,B)&\equiv \frac12\int_0^T\int_0^Bu_x^2(0,y,t)\,dy\,dt+\int_0^T\int_0^Lu^2(x,B,t)\,dx\,dt\notag\\
&\le \frac12\|u_0\|^2.\label{f55}%
\end{align}

\subsection{Estimate II}\label{2-nd estimate}
Multiplying \eqref{2.1} by $(1+x)u,$ we get
\begin{align}\label{3.6}
\left(L^{\mathcal{D}}u,(1+x)u\right)&\equiv
\frac12\frac{d}{dt}\int_0^L\int_0^B(1+x)u^2\,dx\,dy\notag\\
&+\int_0^L\int_0^B\left[\frac32u_x^2+\frac12u_y^2-\frac12u^2\right]\,dx\,dy\notag\\
&+\frac12\int_0^Bu_x^2(0,y,t)\,dy+\int_0^L(1+x)u^2(x,B,t)\,dx=0.
\end{align}
Integrating in $t\in(0,T),$ \eqref{3.6} becomes
\begin{align}\label{3.7}
\|\sqrt{1+x}\,&u\|^2(t)+\int_0^t\Bigl[3\|u_x\|^2(s)+\|u_y\|^2(s)\Bigr]\,ds\notag\\
&+\int_0^t\left\{\int_0^Bu_x^2(0,y,s)\,dy+2\int_0^L(1+x)u^2(x,B,s)\,dx\right\}\,ds\notag\\
&=\|\sqrt{1+x}\,u_0\|^2+\int_0^t\|u\|^2(s)\,ds
\end{align}
which yields \eqref{33.20}.

\end{proof}

\section{Exponential decay}\label{decay}

To prove the exponential decay of the $L^2$ norm of solutions, we
need the observability inequality and the uniform estimate for
solution in $L^2(0,T;L^2).$

\subsection{Estimate III}\label{observability}
We multiply \eqref{2.1} by $(T-t)u$ and integrate over
$\mathcal{D}_T$ to obtain
\begin{align*}\label{4.1}
\int_0^T\left(L^{\mathcal{D}}u,(T-t)u\right)\,dt&\equiv
\frac12\int_0^T\|u\|^2(t)\,dt-\frac{T}{2}\|u_0\|^2\notag\\
&+\frac12\int_0^T(T-t)\int_0^Bu_x^2(0,y,t)\,dy\,dt\notag\\
&+\int_0^T(T-t)\int_0^Lu^2(x,B,t)\,dx\,dt=0.
\end{align*}
Hence,
\begin{equation}\label{4.2}
\|u_0\|^2\le \frac1{T}\int_0^T\|u\|^2(t)\,dt+2\Phi(T,L,B)
\end{equation}
with $\Phi(T,L,B)$ defined in \eqref{f55}.

\subsection{Estimate IV}\label{4-th est}
In conditions of Theorem \ref{theorem1} it holds
\begin{equation}\label{4.3}
\frac1{T}\int_0^T\|u\|^2(t)\,dt\le M\Phi(T,L,B),
\end{equation}
where a constant $M$ does not depend on $T>0.$
\begin{proof}
Indeed, if \eqref{4.3} is false, then there exists a sequence
$u_n(x,y,t)$ of solutions to \eqref{2.1}-\eqref{2.4} such that
\begin{equation}\label{4.4}
\int_0^T\|u_n\|^2(t)\ge n\Phi_n(T,L,B)
\end{equation}
with
$$
\Phi_n(T,L,B)\equiv\int_0^T\left\{\frac12\int_0^Bu_{nx}^2(0,y,t)\,dy+\int_0^Lu_n^2(x,B,t)\,dx\right\}\,dt.
$$
Since $\int_0^T\|u_n\|^2\,dt\le T\|u_0\|^2,$ then
$\Phi_n(T,L,B)\longrightarrow 0$ as $n\to\infty.$ By the properties
of solutions there exists a subsequence
$\Phi_{n_k}\longrightarrow\Phi.$ Therefore, $\Phi(T,L,B)\equiv 0$
which implies
$$
u_x(0,y,t)=u(x,B,t)=0.
$$
Due to Lemma \ref{lemma1} this means $u\equiv 0$ in $\mathcal{D}_T.$
This contradicts to $u_0$ being arbitrary, and \eqref{4.3} is
thereby true.
\end{proof}

We now prove the main result of this work.
\begin{thm}\label{theorem2}
Let all the conditions of Theorem \ref{theorem1} hold. Then
$$
\|u\|(t)\le K\|u_0\|\exp\{-\gamma t\}\ \ for\ all\ \ t>0.
$$
\end{thm}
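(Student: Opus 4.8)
The plan is to first extract a one-step contraction estimate $\|u\|^2(T)\le r\|u_0\|^2$ with some $r<1$ on a fixed time window $[0,T]$, and then to iterate it over the consecutive windows $[nT,(n+1)T]$ using the autonomy of the problem, interpolating at the end by monotonicity of the norm to cover every $t>0$.

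First I would combine the observability inequality \eqref{4.2} with the uniform bound \eqref{4.3}: since $\tfrac1T\int_0^T\|u\|^2(t)\,dt\le M\Phi(T,L,B)$, inequality \eqref{4.2} yields
\begin{equation*}
\|u_0\|^2\le (M+2)\,\Phi(T,L,B).
\end{equation*}
Integrating the differential identity of Estimate~I (Subsection~\ref{1-st estimate}) over $(0,T)$ identifies the dissipated energy with $\Phi$, namely
\begin{equation*}
\Phi(T,L,B)=\tfrac12\bigl(\|u_0\|^2-\|u\|^2(T)\bigr).
\end{equation*}
Substituting and solving for $\|u\|^2(T)$ gives the contraction
\begin{equation*}
\|u\|^2(T)\le r\,\|u_0\|^2,\qquad r=\frac{M}{M+2}<1 .
\end{equation*}
The essential point is that $r<1$ for every finite $M$, which is precisely what the uniform observability of Estimate~IV supplies.

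Next I would use that the operator $A$ is time-independent, so the contraction semigroup $\{S(t)\}$ is invariant under time translation. Applying the window estimate with the regular datum $u(nT)\in D(A)$ in place of $u_0$ gives $\|u\|^2((n+1)T)\le r\,\|u\|^2(nT)$, whence $\|u\|^2(nT)\le r^n\|u_0\|^2$ by induction. For arbitrary $t>0$ set $n=\lfloor t/T\rfloor$; the identity of Estimate~I shows $\frac{d}{dt}\|u\|^2\le 0$, so $\|u\|$ is nonincreasing and $\|u\|^2(t)\le\|u\|^2(nT)\le r^n\|u_0\|^2$. Using $n> t/T-1$ together with $\ln r<0$ to bound $r^n\le r^{-1}r^{t/T}$, I obtain the assertion with
\begin{equation*}
K=\frac{1}{\sqrt r},\qquad \gamma=-\frac{\ln r}{2T}>0 .
\end{equation*}

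Most of the difficulty has already been absorbed into Estimate~IV, whose compactness argument rests on the unique-continuation conclusion of Lemma~\ref{lemma1}; granting it, the present theorem reduces to a standard contraction-semigroup decay argument. The only points requiring care are the bookkeeping in the final interpolation and the justification that the window estimate may legitimately be restarted from $u(nT)$, which is licensed by the autonomy of \eqref{AbstrEq} and the regularity $u\in C([0,T];D(A))$ furnished by Theorem~\ref{theorem1}.
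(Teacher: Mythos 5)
Your proposal is correct and follows essentially the same route as the paper: combine the observability inequality \eqref{4.2} with the uniform estimate \eqref{4.3} and the energy identity of Estimate~I to obtain a one-window contraction $\|u\|^2(T)\le r\|u_0\|^2$ with $r<1$, then iterate using the autonomy of the semigroup. If anything, your version is more explicit than the paper's about the translation/iteration step and about the resulting $T$-dependence of $\gamma$, whereas the paper states the constants $K$ and $\gamma$ without tracking the factor $1/(2T)$.
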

\begin{proof}
Combing \eqref{4.2} and \eqref{4.3} one has
$$
\|u_0\|^2\le C\Phi\text{  with  }C=M+2.
$$
Therefore,
$$
(1+C)\|u\|^2(t)=(1+C)\left[\|u_0\|^2-2\Phi\right]\le
C\|u_0\|^2-(2+C)\Phi\le C\|u_0\|^2.
$$
Thus $$\|u\|(t)\le K\|u_0\|\exp\{-\gamma t\}$$
with
$$
K=\frac{1+C}{C} \text{ and } \gamma =-\ln \frac{C}{1+C}.
$$
\end{proof}

\medskip

\end{document}